\begin{document}

\title[Existence of solution to a nonlinear first-order dynamic equation]{%
Existence of solution to a nonlinear first-order dynamic equation on time scales}

\author[B. Bayour, A. Hammoudi, D. F. M. Torres]{Benaoumeur Bayour,
Ahmed Hammoudi, Delfim F. M. Torres}

\address{Benaoumeur Bayour \newline
\indent University of Chlef, B. P. 151, Hay Es-salem Chlef, Algeria}
\email{b.benaoumeur@gmail.com}

\address{Ahmed Hammoudi \newline
\indent Laboratoire de Math\'{e}matiques, Universit\'{e} de Ain T\'{e}mouchentl\newline
\indent B. P. 89, 46000 Ain T\'{e}mouchent, Algeria}
\email{hymmed@hotmail.com}

\address{Delfim F. M. Torres \newline
\indent Center for Research and Development in Mathematics and Applications (CIDMA)\newline
\indent Department of Mathematics, University of Aveiro, 3810-193 Aveiro, Portugal}
\email{delfim@ua.pt}


\thanks{Submitted Jun 10, 2015. Revised and Accepted Dec 02, 2015.}

\subjclass[2010]{34B15; 34N05}

\keywords{Calculus on time scales; nabla dynamic equations;
nonlinear boundary value problems; existence; tube solution.}


\begin{abstract}
We prove existence of solution to a nonlinear first-order
nabla dynamic equation on an arbitrary bounded time scale
with boundary conditions, where the right-hand side
of the dynamic equation is a continuous function.
\end{abstract}

\maketitle


\numberwithin{equation}{section}
\newtheorem{theorem}{Theorem}[section]
\newtheorem{lemma}[theorem]{Lemma}
\newtheorem{proposition}[theorem]{Proposition}
\newtheorem{corollary}[theorem]{Corollary}
\newtheorem{remark}[theorem]{Remark}
\newtheorem{definition}[theorem]{Definition}
\newtheorem{example}[theorem]{Example}


\section{Introduction}

In this work we prove existence of solution to the following system:
\begin{equation}
\label{1.1}
\begin{gathered}
x^{\nabla}(t)=f(t,x(t)),
\quad t \in \mathbb{T}_{k},\\
x(a)=x(b).
\end{gathered}
\end{equation}
Here $\mathbb{T}$ is an arbitrary bounded time scale, where we denote
$a:=\min\mathbb{T}$, $b:=\max\mathbb{T}$, $\mathbb{T}_{\circ}=\mathbb{T}\setminus\{a\}$,
and $f:\mathbb{T}_{\circ}\times \mathbb{R}^{n}\rightarrow\mathbb{R}^{n}$
is a continuous function. Problem \eqref{1.1} unifies continuous
and discrete problems. We use the notion of tube solution for system \eqref{1.1},
in the spirit of the works of Gilbert and Frigon \cite{[16],[11],[3]}.
This notion is useful to get existence results for systems of differential
equations of first order, as a generalization of lower and upper solutions
\cite{[15],[13],[12],[14]}. Our main result provides existence of solution
to the nonlinear nabla boundary value problem \eqref{1.1}.

The article is organized as follows. In Section~\ref{sec:2} we review
some basic definitions and theorems regarding $\nabla$-differentiation
and $\nabla$-integration on time scales, and we prove some preliminary results.
In Section~\ref{sec:3} we introduce the notion of tube solution for system
\eqref{1.1} and we prove our main result (Theorem~\ref{thm:mr:exist}).
We end with Section~\ref{sec:conc}, mentioning some directions for future work.


\section{Preliminaries}
\label{sec:2}

A time scale $\mathbb{T}$ is defined to be any nonempty closed subset
of $\mathbb{R}$. Then the forward and backward jump operators $\sigma,\rho:
\mathbb{T}\rightarrow\mathbb{T}$ are defined by
$$
\sigma(t)=\inf\{s\in\mathbb{T}:s>t\}
\quad \text{ and }
\quad \rho(t)=\sup\{t\in\mathbb{T}:s<t\}.
$$
For $t\in\mathbb{T}$, we say that $t$ is left-scattered
(respectively right-scattered) if $\rho(t)<t$ (respectively $\sigma(t)>t$);
that $t$ is isolated  if it is left-scattered and right-scattered. Similarly,
if $t>\inf(\mathbb{T})$ and $\rho(t)=t$, then we say that $t$ is left-dense;
if $t<\sup(\mathbb{T})$ and $\sigma(t)=t$, then we say that $t$ is right-dense.
Points that are simultaneously left- and right-dense are called dense.
If $\mathbb{T}$ has a right-scattered minimum $m$, then we define
$\mathbb{T}_{\kappa}:=\mathbb{T}-\{m\}$; otherwise, we set
$\mathbb{T}_{\kappa}:=\mathbb{T}$. The (backward) graininess
$\nu:\mathbb{T}_{\kappa}\rightarrow[0,+\infty[$
is defined by $\nu(t):=t-\rho(t)$.

\begin{definition}[See \cite{[2],[3]}]
For $f:\mathbb{T}\rightarrow\mathbb{R}^{n}$ and $t\in\mathbb{T}_{\kappa}$,
the nabla derivative of $f$ at $t$, denoted by $f^{\nabla}(t)$, is defined
to be the number (provided it exists) with the property that given any
$\epsilon>0$ there is a  neighborhood $U$ of $t$ such that
$$
\left\|f(\rho(t))-f(s)-f^{\nabla}(t)[\rho(t)-s]\right\|
\leq \epsilon |\rho(t)-s|
$$
for all $s\in U$. If $f$ is $\nabla$-differentiable at $t$ for every
$t\in\mathbb{T}_{\kappa}$, then $f:\mathbb{T}\rightarrow\mathbb{R}^{n}$
is called the $\nabla$-derivative of $f$ on $\mathbb{T}_{\kappa}$.
\end{definition}

\begin{theorem}[See \cite{[4]}]
Assume $f:\mathbb{T}\rightarrow\mathbb{R}^{n}$
and let $t\in\mathbb{T}_{\kappa}$. The following holds:
\begin{enumerate}
\item If $f$ is $\nabla$-differentiable at $t$, then $f$ is continuous at $t$.

\item If $f$ is continuous at the left-scattered point $t$, then $f$
is $\nabla$-differentiable at $t$ with
$$
f^{\nabla}(t)=\frac{f(t)-f(\rho(t)}{\nu(t)}.
$$

\item If $t$ is left-dense, then $f$ is nabla differentiable at $t$
if and only if the limit
$$
\lim_{s\rightarrow t}\frac{f(t)-f(s)}{t-s}
$$
exists in $\mathbb{R}^{n}$. In this case,
$$
f^{\nabla}(t)=\lim_{s\rightarrow t}\frac{f(t)-f(s)}{t-s}.
$$

\item If $f$ is nabla differentiable at $t$, then
$$
f^{\rho}(t)=f(t)-\nu(t)f^{\nabla}(t),
$$
where $f^{\rho}(t) := f\left(\rho(t)\right)$.
\end{enumerate}
\end{theorem}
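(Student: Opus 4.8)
The plan is to derive all four statements directly from the defining inequality of the nabla derivative, exploiting two elementary observations: the neighborhood $U$ furnished by the definition contains $t$ itself, so $s=t$ is always an admissible test point, and the increment $\rho(t)-s$ has a definite sign once we know whether $t$ is left-scattered or left-dense. I would establish the items in the order (4), (1), (2), (3), since (4) is needed in the proof of (1). For (4), fix $\epsilon>0$, take the neighborhood $U$ given by the definition, and evaluate the defining inequality at $s=t$; this yields
$$
\bigl\|f(\rho(t))-f(t)+\nu(t)f^{\nabla}(t)\bigr\|\le\epsilon\,\nu(t),
$$
and since $\epsilon>0$ is arbitrary the left-hand side vanishes, which is exactly $f^{\rho}(t)=f(t)-\nu(t)f^{\nabla}(t)$.

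For (1), I would use the familiar device of shrinking the tolerance in advance. Given $\epsilon\in(0,1)$, set $\epsilon^{*}:=\epsilon\bigl[1+\|f^{\nabla}(t)\|+2\nu(t)\bigr]^{-1}$, take the neighborhood $U$ associated with this $\epsilon^{*}$, and restrict attention to $s\in U\cap(t-\epsilon^{*},t+\epsilon^{*})$. Writing $f(t)-f(s)=\bigl[f(t)-f(\rho(t))\bigr]+\bigl[f(\rho(t))-f(s)\bigr]$, substituting the exact identity from (4) into the first bracket and the defining estimate into the second, the coefficients of $f^{\nabla}(t)$ combine (using $\nu(t)+\rho(t)=t$) into $f^{\nabla}(t)(t-s)$, leaving
$$
\|f(t)-f(s)\|\le\|f^{\nabla}(t)\|\,|t-s|+\epsilon^{*}\,|\rho(t)-s|.
$$
Bounding $|\rho(t)-s|\le\nu(t)+|t-s|$ and using $|t-s|<\epsilon^{*}<1$, the right-hand side is dominated by $\epsilon^{*}\bigl[1+\|f^{\nabla}(t)\|+2\nu(t)\bigr]=\epsilon$, which gives continuity at $t$.

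For (2), with $t$ left-scattered I would propose the candidate value $D:=\nu(t)^{-1}\bigl[f(t)-f(\rho(t))\bigr]$ and verify the definition on $U=(\rho(t),t+\delta)$, a legitimate neighborhood of $t$ since $(\rho(t),t)\cap\mathbb{T}=\emptyset$. At $s=t$ the quantity inside the norm is identically zero by the choice of $D$; for $s\in(t,t+\delta)\cap\mathbb{T}$ I would rewrite it as that zero term plus $f(t)-f(s)-D(t-s)$ and make this at most $\epsilon\,\nu(t)$ by continuity of $f$ at $t$, shrinking $\delta$, which suffices because $|\rho(t)-s|\ge\nu(t)$ on this range. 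For (3), with $t$ left-dense we have $\rho(t)=t$, so dividing the defining inequality by $|t-s|$ for $s\ne t$ turns it into $\|[f(t)-f(s)]/(t-s)-f^{\nabla}(t)\|\le\epsilon$, which is precisely the statement that the indicated limit exists and equals $f^{\nabla}(t)$; the converse follows by reversing the same manipulation. I expect the only genuine bookkeeping to lie in (1), namely the calibration of $\epsilon^{*}$ so that every error contribution collapses neatly into $\epsilon$; items (2), (3), and (4) are essentially immediate once the right test point and neighborhood are chosen.
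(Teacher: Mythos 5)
The paper does not prove this theorem at all --- it is quoted with a citation to \cite{[4]} (Bohner and Peterson), so there is no internal proof to compare against. Your argument is correct and is essentially the standard one from that reference: deriving (4) by testing the defining inequality at $s=t$, using it together with $\nu(t)+\rho(t)=t$ to collapse the error terms for continuity in (1), verifying the candidate quotient on the one-sided neighborhood $(\rho(t),t+\delta)$ in (2) where $|\rho(t)-s|\geq\nu(t)$ supplies the needed lower bound, and observing that (3) is just the definition with $\rho(t)=t$ divided by $|t-s|$. Your calibration of $\epsilon^{*}$ in (1) checks out (the bound $\epsilon^{*}\bigl[1+\|f^{\nabla}(t)\|+\nu(t)\bigr]$ already suffices, so the $2\nu(t)$ is harmless slack), and the ordering (4), (1), (2), (3) correctly respects the one genuine dependency.
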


\begin{theorem}[See \cite{[4]}]
\label{2}
Assume $f,g:\mathbb{T}\rightarrow\mathbb{R}$ are nabla differentiable at
$t\in\mathbb{T}_{\kappa}$. Then,
\begin{enumerate}
\item $f+g$ is nabla differentiable at $t$ and
$(f+g)^{\nabla}(t)=f^{\nabla}(t)+g^{\nabla}(t)$;

\item $\alpha f$ is nabla differentiable at $t$ for every
$\alpha\in\mathbb{R}$ and $(\alpha f)^{\nabla}(t)=\alpha f^{\nabla}(t)$;

\item $fg$ is nabla differentiable at $t$ and
$$
(fg)^{\nabla}(t)=f^{\nabla}(t)g(t)+f^{\rho}(t)g^{\nabla}(t)
=f(t)g^{\nabla}(t)+f^{\nabla}(t)g^{\rho}(t);
$$

\item if $g(t)g^{\rho}(t)\neq0$, then $\frac{f}{g}$
is nabla differentiable at $t$ and
$$
\left(\frac{f}{g}\right)^{\nabla}(t)
=\frac{f^{\nabla}(t)g(t)-f(t)g^{\nabla}(t)}{g(t)g^{\rho}(t)};
$$

\item if $f$ and $f^{\nabla}$ are continuous, then
$$
\left(\int^{t}_{a}f(t,s)\nabla s\right)^{\nabla}
=f(\rho(t),t)+ \int^{t}_{a}f^{\nabla}(t,s)\nabla s.
$$
\end{enumerate}
\end{theorem}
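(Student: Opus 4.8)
The plan is to derive every assertion directly from the $\epsilon$-neighborhood definition of the nabla derivative, using the continuity statement (part (1) of the previous theorem) and the identity $f^{\rho}(t)=f(t)-\nu(t)f^{\nabla}(t)$ (part (4)) to reconcile the two forms that appear. Items (1) and (2) I would dispatch first: given $\epsilon>0$, I choose a single neighborhood $U$ of $t$ on which the defining inequalities for $f$ and for $g$ simultaneously hold, and then one application of the triangle inequality (for the sum) or factoring out $|\alpha|$ (for the scalar multiple) shows that $f^{\nabla}(t)+g^{\nabla}(t)$, respectively $\alpha f^{\nabla}(t)$, satisfies the defining estimate. These two parts require no continuity input and set the template for the rest.

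The product rule (3) is the heart of the argument. I would start from $f(\rho(t))g(\rho(t))-f(s)g(s)$ and insert a cross term, writing it as $f(\rho(t))[g(\rho(t))-g(s)]+g(s)[f(\rho(t))-f(s)]$, which isolates the two defining differences. Replacing each bracketed difference by its linear approximation produces errors of order $\epsilon|\rho(t)-s|$ with coefficients $|f(\rho(t))|$ and $|g(s)|$ (the latter bounded near $t$ by continuity), while the remaining principal part reassembles into $[f^{\nabla}(t)g(t)+f^{\rho}(t)g^{\nabla}(t)](\rho(t)-s)$ once the residual factor $f^{\nabla}(t)[g(s)-g(t)]$ is absorbed into the error using continuity of $g$ at $t$. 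This yields the first form. The second form then follows purely algebraically: substituting $g(t)=g^{\rho}(t)+\nu(t)g^{\nabla}(t)$ and $f^{\rho}(t)=f(t)-\nu(t)f^{\nabla}(t)$ and cancelling the two $\nu(t)f^{\nabla}(t)g^{\nabla}(t)$ contributions leaves $f(t)g^{\nabla}(t)+f^{\nabla}(t)g^{\rho}(t)$.

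For the quotient rule (4) I would first establish the reciprocal rule $(1/g)^{\nabla}(t)=-g^{\nabla}(t)/(g(t)g^{\rho}(t))$. Since $g(t)g^{\rho}(t)\neq 0$ and $g$ is continuous at $t$, the function $g$ stays bounded away from zero on a neighborhood, so $1/g$ is well defined there; estimating $1/g(\rho(t))-1/g(s)$ over a common denominator and invoking the defining inequality for $g$ gives the claim. Applying the product rule (in its second form) to the factorization $f\cdot(1/g)$ and simplifying over the common denominator $g(t)g^{\rho}(t)$ then produces the stated formula.

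I expect the genuine obstacle to be the Leibniz integration rule (5). Writing $F(t)=\int_{a}^{t}f(t,s)\nabla s$, I would split the increment $F(t)-F(\rho(t))$ into a moving-endpoint piece $\int_{\rho(t)}^{t}f(t,s)\nabla s$ and a parameter-variation piece $\int_{a}^{\rho(t)}[f(t,s)-f(\rho(t),s)]\nabla s$. At a left-scattered point, dividing by $\nu(t)$ turns the parameter piece into $\int_{a}^{\rho(t)}f^{\nabla}(t,s)\nabla s$ via part (2) of the previous theorem, and a single reindexing of the endpoint contribution promotes $\int_{a}^{\rho(t)}$ to $\int_{a}^{t}$ while extracting the boundary term $f(\rho(t),t)$; at a left-dense point the nabla Fundamental Theorem of Calculus supplies the same boundary value $f(\rho(t),t)=f(t,t)$ directly. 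The delicate part is precisely to handle left-scattered and left-dense points uniformly and to justify interchanging the nabla derivative with the integral, where the joint continuity of $f$ and $f^{\nabla}$ is exactly what makes the limit and integral commute.
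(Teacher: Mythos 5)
This theorem is not proved in the paper at all: it is quoted with the attribution ``See \cite{[4]}'' and taken as known background from Bohner and Peterson. Your reconstruction is essentially the standard argument from that reference --- the cross-term decomposition for the product rule with $g(s)\to g(t)$ absorbed into the error, the algebraic passage between the two product forms via $f^{\rho}=f-\nu f^{\nabla}$, the reciprocal-then-product route to the quotient rule, and the two-piece splitting of the integral increment for the Leibniz rule --- and the one genuinely delicate point you flag (justifying the interchange of limit and integral at left-dense points in part (5), where continuity of $f^{\nabla}$ is needed) is indeed where the real work lies, so the proposal is sound.
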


\begin{theorem}
\label{thm:pt}
Let $W$ be an open set of $\mathbb{R}^{n}$ and $t\in\mathbb{T}$
be a left-dense point. If $g:\mathbb{T}\rightarrow\mathbb{R}^{n}$
is nabla-differentiable at $t$ and $f:W\rightarrow \mathbb{R}$
is differentiable at $g(t)\in W$, then $f \circ g$
is nabla-differentiable at $t$ with
$\left(f \circ g\right)^{\nabla}(t) = \langle f'(g(t)),g^{\nabla}(t)\rangle$.
\end{theorem}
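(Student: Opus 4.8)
The plan is to exploit the hypothesis that $t$ is left-dense, so that the nabla derivative reduces to an ordinary-type limit. By the limit characterization of the nabla derivative at left-dense points (item~(3) of the first theorem of this section), the composition $f\circ g$ is nabla-differentiable at $t$ precisely when the limit $\lim_{s\to t}\frac{(f\circ g)(t)-(f\circ g)(s)}{t-s}$ exists in $\mathbb{R}$, and in that case the limit \emph{is} $(f\circ g)^{\nabla}(t)$. Hence it suffices to prove that this difference quotient converges to $\langle f'(g(t)),g^{\nabla}(t)\rangle$.

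First I would verify that $f\circ g$ is well defined on a neighborhood of $t$ in $\mathbb{T}$: since $g$ is nabla-differentiable at $t$, it is continuous there, so $g(s)\to g(t)\in W$ as $s\to t$; as $W$ is open, $g(s)\in W$ for every $s\in\mathbb{T}$ sufficiently close to $t$, and $f(g(s))$ makes sense. Next, ordinary differentiability of $f$ at $y_{0}:=g(t)$ supplies the increment formula
$$
f(y)-f(y_{0})=\langle f'(y_{0}),y-y_{0}\rangle+\|y-y_{0}\|\,\varphi(y),
$$
where $\varphi(y)\to 0$ as $y\to y_{0}$ and we set $\varphi(y_{0})=0$, so that $\varphi$ is continuous at $y_{0}$.

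The core computation substitutes $y=g(s)$ and divides by $t-s$ to obtain
$$
\frac{f(g(t))-f(g(s))}{t-s}
=\left\langle f'(g(t)),\frac{g(t)-g(s)}{t-s}\right\rangle
-\frac{\|g(s)-g(t)\|}{t-s}\,\varphi(g(s)).
$$
Letting $s\to t$, the first term tends to $\langle f'(g(t)),g^{\nabla}(t)\rangle$: because $t$ is left-dense and $g$ is nabla-differentiable, $\frac{g(t)-g(s)}{t-s}\to g^{\nabla}(t)$, and the inner product is continuous. For the remainder term, $\frac{\|g(s)-g(t)\|}{|t-s|}$ is bounded (it converges to $\|g^{\nabla}(t)\|$), while $\varphi(g(s))\to\varphi(g(t))=0$ by continuity of $g$ at $t$; hence the product vanishes in the limit. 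Collecting the two pieces shows that the difference quotient converges to $\langle f'(g(t)),g^{\nabla}(t)\rangle$, which by the left-dense characterization is exactly $(f\circ g)^{\nabla}(t)$.

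The main obstacle — and the reason for using the increment formulation rather than factoring the quotient as a product of two difference quotients — is that $g$ need not be locally injective, so $g(s)-g(t)$ may vanish for values of $s$ arbitrarily close to $t$, making division by it illegitimate. Extending $\varphi$ by $\varphi(y_{0})=0$ sidesteps this difficulty and keeps every expression well defined throughout the argument.
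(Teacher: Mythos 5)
Your proof is correct, but it follows a different route from the paper's. The paper verifies the $\epsilon$-neighborhood definition of the nabla derivative directly: it fixes $\epsilon'=\epsilon/k$ with $k$ to be chosen, splits
$f(g(t))-f(g(s))-\langle f'(g(t)),g^{\nabla}(t)\rangle(t-s)$
by the triangle inequality into the error from the differentiability of $f$ (bounded by $\epsilon'\|g(t)-g(s)\|$) plus the inner product of $f'(g(t))$ with the error $g(t)-g(s)-g^{\nabla}(t)(t-s)$ (bounded via Cauchy--Schwarz), and finally sets $k=1+\|g^{\nabla}(t)\|+\|f'(g(t))\|$. You instead invoke the limit characterization of the nabla derivative at left-dense points and reduce everything to the classical increment formula $f(y)-f(y_0)=\langle f'(y_0),y-y_0\rangle+\|y-y_0\|\varphi(y)$ with $\varphi(y)\to 0$; the hypothesis of left-density enters through the availability of that limit characterization, whereas in the paper it enters through the identification $\rho(t)=t$ in the definition. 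The underlying estimates are essentially the same, but your packaging delegates the bookkeeping to two standard facts and avoids the paper's slightly awkward device of naming the constant $k$ before it is determined; the paper's version has the merit of working straight from the definition without appealing to the limit characterization. Your closing remark about why one cannot factor the difference quotient (non-injectivity of $g$) correctly identifies the pitfall that the increment formulation, like the paper's two-term split, is designed to avoid. One small point worth making explicit in either approach: $f\circ g$ is only defined where $g(s)\in W$, so the continuity argument you give for the well-definedness of the composition near $t$ is genuinely needed and is implicit, not stated, in the paper.
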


\begin{proof}
Let $\epsilon > 0$. We need to show that there exists
a neighborhood $U$ of $t$ such that
$\left|f(g(t))-f(g(s))-\langle f'(g(t)),g^{\nabla}(t)\rangle (t-s)\right|
\leq \epsilon |t-s|$
for all $s\in U$. Let $k>0$ be a constant and $\epsilon'=\frac{\epsilon}{k}$.
By hypotheses, there exists a neighborhood $U_{1}$ of $t$ where
$\left\| g(t)-g(s)-g^{\nabla}(t)(t-s)\right\| \leq \epsilon' |t-s|$
for all $s\in U_{1}$. In addition, there exists a neighborhood
$V\subset W$ of $g(t)$ such that
$|f(g(t))-f(y)-\langle f'(g(t)),g(t)-y\rangle |
\leq\epsilon'|g(t)-y|$
for all $y\in V$. Since function $g$ is $\nabla$-differentiable at $t$,
it is also continuous at this point, and there exists a neighborhood $U_{2}$
of $t$ such that $g(s)\in V$ for all $s\in U_{2}$.
Let $U:=U_{1}\cap U_{2}$. In this case $U$ is a neighborhood of $t$ and
if $s \in U$, then
\begin{equation*}
\begin{split}
|f&(g(t))-f(g(s))-\langle f'(g(t)),g^{\nabla}(t)\rangle (t-s) |\\
&\leq \left|f(g(t))-f(g(s))-\langle f'(g(t)),g(t)-g(s)\rangle\right|\\
&\qquad + \left|\langle f'(g(t)),g(t)-g(s)\rangle
- \langle f'(g(t)),g^{\nabla}(t)\rangle (t-s)\right|\\
&\leq\epsilon'\left\|g(t)-g(s)\right\|
+\left|\langle f'(g(t)),g(t)-g(s)-g^{\nabla}(t)(t-s)\rangle \right|  \\
&\leq \epsilon' \left(\epsilon'|t-s|+\|g^{\nabla}(t)(t-s)\|\right)
+\|f'(g(t))\| \, \|g(t)-g(s)-g^{\nabla}(t)(t-s)\| \\
&\leq \epsilon'(1+\|g^{\nabla}(t)\|+\|f'(g(t))\|)|t-s|.
\end{split}
\end{equation*}
Put $k=1+\|g^{\nabla}(t)\|+\|f'(g(t))\|$ and the theorem is proved.
\end{proof}

\begin{example}
\label{example}
Assume $x:\mathbb{T}\rightarrow\mathbb{R}^{n}$ is nabla differentiable
at $t\in\mathbb{T}$. We know that $\|\cdot\|:\mathbb{R}^{n}\backslash\{0\}
\rightarrow[0,+\infty[$ is differentiable if $t=\rho(t)$.
It follows from Theorem~\ref{thm:pt} that
$$
\|x(t)\|^{\nabla}=\frac{\langle x(t),x^{\nabla}(t)\rangle}{\|x(t)\|}.
$$
\end{example}

\begin{definition}
A function  $f:\mathbb{T}\rightarrow\mathbb{R}^{n}$
is called ld-continuous provided it is continuous at left-dense points in
$\mathbb{T}$ and its right-sided limits exist (finite) at right-dense points
in $\mathbb{T}$. The set of all ld-continuous functions $f:\mathbb{T}
\rightarrow\mathbb{R}^{n}$ is denoted by $C_{ld}(\mathbb{T},\mathbb{R}^{n})$.
The set of functions $f:\mathbb{T}\rightarrow\mathbb{R}^{n}$ that are
nabla-differentiable and whose nabla-derivative is ld-continuous, is denoted
by $C_{ld}^{1}(\mathbb{T},\mathbb{R}^{n})$. It is known that if $f$ is
ld-continuous, then there is a function $F$ such that $F^{\nabla}=f$
\cite{[5]}. In this case,
$$
\int_{a}^{b}f(t)\nabla t := F(b)-F(a).
$$
\end{definition}

\begin{theorem}[See \cite{[1]}]
Assume $a,b,c\in\mathbb{T}$. Then
\begin{enumerate}
\item $\int_{a}^{b}[f(t)+g(t)]\nabla t
=\int_{a}^{b}f(t)\nabla t+\int_{a}^{b}g(t)\nabla t$;

\item $\int_{a}^{b}kf(t)\nabla t=k\int_{a}^{b}f(t)\nabla t$;

\item $\int_{a}^{b}f(t)\nabla t=-\int_{b}^{a}f(t)\nabla t$;

\item $\int_{a}^{b}f(t)\nabla t=\int_{a}^{c}f(t)\nabla t
+\int_{c}^{b}f(t)\nabla t$;

\item $\int_{a}^{b}f^{\nabla}(t)g(t)\nabla t= \left.f(t)g(t)\right|_{a}^{b}
-\int_{a}^{b}f^{\rho}(t)g^{\nabla}(t)\nabla t$.
\end{enumerate}
\end{theorem}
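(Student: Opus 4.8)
The plan is to reduce every item to the defining property of the nabla integral, namely that $\int_{a}^{b}h(t)\,\nabla t = H(b)-H(a)$ for any antiderivative $H$ (that is, any $H$ with $H^{\nabla}=h$), and then to invoke the matching $\nabla$-differentiation rule from Theorem~\ref{2}. Since the integrals appearing are defined only for ld-continuous integrands, I may assume antiderivatives $F,G$ with $F^{\nabla}=f$ and $G^{\nabla}=g$ exist; this existence, quoted in the definition of the nabla integral, is the only ingredient I need beyond Theorem~\ref{2}.

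For (1), part~(1) of Theorem~\ref{2} gives $(F+G)^{\nabla}=f+g$, so $F+G$ is an antiderivative of $f+g$; evaluating $(F+G)(b)-(F+G)(a)$ and regrouping yields the claim. Item (2) is identical, using part~(2) of Theorem~\ref{2} to see that $kF$ is an antiderivative of $kf$. Items (3) and (4) need no differentiation rule at all: (3) is the algebraic identity $F(b)-F(a)=-\bigl(F(a)-F(b)\bigr)$, and (4) is the telescoping identity $\bigl(F(c)-F(a)\bigr)+\bigl(F(b)-F(c)\bigr)=F(b)-F(a)$, valid for any $a,b,c\in\mathbb{T}$ once a single antiderivative $F$ is fixed.

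The only item with genuine content is (5). Here I would begin from the product rule, part~(3) of Theorem~\ref{2}, which gives $(fg)^{\nabla}(t)=f^{\nabla}(t)g(t)+f^{\rho}(t)g^{\nabla}(t)$. Thus $fg$ is an antiderivative of $f^{\nabla}g+f^{\rho}g^{\nabla}$, whence $\int_{a}^{b}\bigl(f^{\nabla}g+f^{\rho}g^{\nabla}\bigr)\nabla t = f(b)g(b)-f(a)g(a)=\left.f(t)g(t)\right|_{a}^{b}$. Applying the already-established linearity~(1) to split this integral and then rearranging produces the integration-by-parts formula.

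The main (and essentially only) obstacle is the existence of the antiderivatives: the whole argument presupposes that the integrands are ld-continuous, so that the existence result quoted in the definition of $\int_{a}^{b}f(t)\,\nabla t$ applies. For (5) one must additionally verify that $f^{\nabla}g+f^{\rho}g^{\nabla}$ is itself ld-continuous, so that its integral is defined in the first place; this follows from ld-continuity of $f^{\nabla}$ and $g^{\nabla}$, continuity of $f$ and $g$, and the observation that $f^{\rho}=f\circ\rho$ inherits the required regularity. Once these regularity points are granted, each identity is a one-line consequence of the definition together with Theorem~\ref{2}.
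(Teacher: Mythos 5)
The paper itself gives no proof of this theorem; it is quoted verbatim from Atici and Guseinov \cite{[1]}, so there is nothing internal to compare your argument against. Your argument is nonetheless correct and is the standard one: reducing each identity to the defining formula $\int_a^b h(t)\,\nabla t = H(b)-H(a)$ and invoking the corresponding differentiation rule from Theorem~\ref{2} (linearity for (1)--(2), pure algebra for (3)--(4), and the product rule $(fg)^{\nabla}=f^{\nabla}g+f^{\rho}g^{\nabla}$ for (5)) settles everything, and you correctly flag the only point needing care, namely that each integrand must be ld-continuous so that the antiderivatives invoked actually exist and the splitting in (5) via item (1) is legitimate.
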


\begin{theorem}[See \cite{[1]}]
\label{5}
The following inequalities hold:
$$
\left|\int_{a}^{b}f(t)g(t)\nabla t\right|
\leq \int_{a}^{b}|f(t)g(t)|\nabla t
\leq \left( \max_{\sigma(a)\leq t\leq b}|f(t)|\right)
\int_{a}^{b} |g(t)|\nabla t.
$$
\end{theorem}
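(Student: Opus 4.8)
The plan is to reduce both inequalities to a single monotonicity principle for the nabla integral, namely: if $h\in C_{ld}(\mathbb{T},\mathbb{R})$ satisfies $h(t)\ge 0$ for every $t\in\mathbb{T}_{\kappa}$, then $\int_{a}^{b}h(t)\,\nabla t\ge 0$. Once this is available, the rest is purely formal. For the left inequality, observe that $|fg|-fg\ge 0$ and $|fg|+fg\ge 0$ pointwise; applying monotonicity together with the linearity of the nabla integral gives $\int_{a}^{b} fg\,\nabla t\le \int_{a}^{b} |fg|\,\nabla t$ and $-\int_{a}^{b} fg\,\nabla t\le \int_{a}^{b} |fg|\,\nabla t$, whence $\bigl|\int_{a}^{b} fg\,\nabla t\bigr|\le\int_{a}^{b}|fg|\,\nabla t$. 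For the right inequality, set $M:=\max_{\sigma(a)\le t\le b}|f(t)|$; then $M|g(t)|-|f(t)g(t)|=(M-|f(t)|)\,|g(t)|\ge 0$, and monotonicity combined with the constant-factor rule for the integral yields $\int_{a}^{b}|fg|\,\nabla t\le M\int_{a}^{b}|g|\,\nabla t$.

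It therefore remains to establish the monotonicity principle, which I would argue through an antiderivative. Since $h$ is ld-continuous, there is $F$ with $F^{\nabla}=h$ and $\int_{a}^{b} h\,\nabla t=F(b)-F(a)$, so it suffices to prove that $F$ is nondecreasing on $\mathbb{T}$. At a left-scattered point $t$, part~(2) of the differentiation theorem gives $F(t)-F(\rho(t))=\nu(t)F^{\nabla}(t)=\nu(t)h(t)\ge 0$, so $F$ cannot decrease across a backward jump. At a left-dense point $t$, part~(3) gives $F^{\nabla}(t)=\lim_{s\to t}\frac{F(t)-F(s)}{t-s}=h(t)\ge 0$. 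Combining these two local facts into the global statement that $F$ is nondecreasing is the crux of the argument.

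The hard part is precisely this last combination: having a nonnegative nabla derivative at every point does not, on its own, visibly force monotonicity, because at left-dense points there is no minimal forward step to exploit and one cannot simply chain finitely many jump inequalities. I would handle it with a time-scale mean value theorem (or, equivalently, a supremum/connectedness argument): fix $t_{1}<t_{2}$ in $\mathbb{T}$ and consider the set of points $s\in[t_{1},t_{2}]\cap\mathbb{T}$ up to which $F(s)\ge F(t_{1})$ already holds, show that this set is closed and has a maximum $s^{\ast}$, and then rule out $s^{\ast}<t_{2}$ by using the left-scattered inequality at a forward jump and the left-dense difference-quotient limit at a dense point to push $s^{\ast}$ strictly further. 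This yields $F(t_{2})\ge F(t_{1})$ and hence $F(b)\ge F(a)$, closing the proof. All the remaining manipulations—splitting the absolute value, pulling out the constant $M$, and applying linearity—are routine consequences of the integral's properties stated above.
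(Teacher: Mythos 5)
First, note that the paper does not prove this theorem at all: it is stated as a quoted result from Atici and Guseinov \cite{[1]}, so there is no in-paper argument to compare against. Your reduction of both inequalities to the monotonicity principle ``$h\ge 0$ on $\mathbb{T}_{\kappa}$ implies $\int_a^b h(t)\,\nabla t\ge 0$'' is the standard route and is carried out correctly; in particular you handle the $\sigma(a)$ subtlety properly, since $\mathbb{T}_{\kappa}=[\sigma(a),b]\cap\mathbb{T}$, so $(M-|f(t)|)\,|g(t)|\ge 0$ holds at every point where the monotonicity principle needs it, even if $|f(a)|>M$ at a right-scattered $a$.

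The gap is in the crux you yourself flag. The supremum argument as described --- take the maximal $s^{\ast}$ up to which $F\ge F(t_1)$ holds and ``push it strictly further'' --- breaks down when $s^{\ast}$ is right-dense: the nabla derivative of $F$ at a point $s>s^{\ast}$ only controls the limit of $(F(s)-F(u))/(s-u)$ as $u\to s$, so it gives no lower bound on the finite difference $F(s)-F(s^{\ast})$, and $F$ could a priori dip below $F(s^{\ast})$ on a right neighbourhood even though $F^{\nabla}\ge 0$ pointwise. (Appealing to ``a time-scale mean value theorem'' here is circular, since that theorem is essentially the statement being proved, unless you import it as a known external result.) The repair is an extremum argument in the spirit of the paper's own Lemma~\ref{lem}, combined with an $\epsilon$-perturbation: set $F_{\epsilon}(t)=F(t)+\epsilon t$, so that $F_{\epsilon}^{\nabla}=h+\epsilon>0$ strictly on $\mathbb{T}_{\kappa}$. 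If $F_{\epsilon}$ dropped below $F_{\epsilon}(t_1)$ somewhere on the compact set $[t_1,t_2]\cap\mathbb{T}$, its minimum there would be attained at some $w>t_1$, and at such a $w$ one gets $F_{\epsilon}^{\nabla}(w)\le 0$ (via the backward difference quotient if $\rho(w)<w$, via the left difference-quotient limit if $\rho(w)=w$), a contradiction. Hence $F(t_2)-F(t_1)\ge -\epsilon(t_2-t_1)$ for every $\epsilon>0$, and letting $\epsilon\to 0$ gives the monotonicity. With that substitution for your final step, the proof is complete.
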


\begin{definition}[See \cite{[4]}]
For $\epsilon>0$, the (nabla) exponential function $\hat{e}_{\epsilon}(\cdot,t_{0})
:\mathbb{T}\rightarrow \mathbb{R}$ is defined as the unique
solution to the initial value problem
$$
x^{\nabla}(t)=\epsilon x(t),
\quad x(t_{0})=1.
$$
More explicitly, the exponential function $\hat{e}_{\epsilon}(\cdot,t_{0})
:\mathbb{T}\rightarrow \mathbb{R}$ is given by the formula
$$
\hat{e}_{\epsilon}(t,t_{0})
=exp\left(\int_{t_{0}}^{t}\hat{\xi}_{\epsilon}(\nu(s))\nabla s\right),
$$
where for $h\geq 0$ we define $\hat{\xi}_{\epsilon}(h)$ as
\begin{displaymath}
\hat{\xi}_{\epsilon}(h)
= \left\{
\begin{array}{ll}
\epsilon & \textrm{if $h=0$},\\
-\frac{log(1-h\epsilon)}{h} & \textrm{otherwise}.
\end{array} \right.
\end{displaymath}
\end{definition}

\begin{proposition}
\label{***}
If $g\in C^{1}(\mathbb{T}_{\kappa},\mathbb{R}^{n})$, then function
$x:\mathbb{T}\rightarrow \mathbb{R}^{n}$ defined by
$$
x(t) =\hat{e}_{1}(t,b)\left[\frac{\hat{e}_{1}(a,b)}{\hat{e}_{1}(a,b)-1}
\int_{(a,b]\cap \mathbb{T}}\frac{g(s)}{\hat{e}_{1}(\rho(s),b)}\nabla s
-\int_{(t,b]\cap \mathbb{T}}\frac{g(s)}{\hat{e}_{1}(\rho(s),b)}\nabla s\right]
$$
is solution to the problem
\begin{equation}
\label{1.4}
\begin{gathered}
x^{\nabla}(t)-x(t)=g(t),
\quad t\in\mathbb{T}_{\kappa},\\
x(a)=x(b).
\end{gathered}
\end{equation}
\end{proposition}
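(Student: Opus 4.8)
The plan is to verify directly that the stated $x$ solves both the dynamic equation and the boundary condition; the proposition is a verification rather than a construction. To lighten notation, write $E(t) := \hat{e}_1(t,b)$, so that by the defining initial value problem of the exponential function $E^{\nabla}(t) = E(t)$ and $E(b) = 1$, and note $E^{\rho}(t) = \hat{e}_1(\rho(t),b)$ with $E > 0$, so that division by $E^{\rho}$ is legitimate. Put $\phi(s) := g(s)/\hat{e}_1(\rho(s),b) = g(s)/E^{\rho}(s)$, which is ld-continuous since $g \in C^1$, and set $I(t) := \int_{(t,b]\cap\mathbb{T}}\phi(s)\nabla s$ together with the constant $C := \frac{E(a)}{E(a)-1}\,I(a)$, so that $x(t) = E(t)\,[C - I(t)]$.

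First I would compute $x^{\nabla}$. Applying the product rule (Theorem~\ref{2}(3)) in the form $(fh)^{\nabla} = f^{\nabla}h + f^{\rho}h^{\nabla}$ with $f = E$ and $h = C - I$ gives $x^{\nabla}(t) = E^{\nabla}(t)[C - I(t)] + E^{\rho}(t)(-I^{\nabla}(t))$. The first term reproduces $x(t)$ because $E^{\nabla} = E$. For the second term I would differentiate the variable lower limit by the nabla Fundamental Theorem of Calculus (the antiderivative property recalled above, valid as $\phi$ is ld-continuous): writing $I(t) = -\int_{b}^{t}\phi(s)\nabla s$ yields $I^{\nabla}(t) = -\phi(t)$, so that $E^{\rho}(t)(-I^{\nabla}(t)) = E^{\rho}(t)\phi(t) = g(t)$, the factor $E^{\rho}(t)$ cancelling the denominator of $\phi$ exactly. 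Collecting terms gives $x^{\nabla}(t) = x(t) + g(t)$, i.e.\ the required equation.

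For the boundary condition I would evaluate at the endpoints. At $t = b$ one has $E(b) = 1$ and $I(b) = \int_{(b,b]\cap\mathbb{T}}\phi(s)\nabla s = 0$, hence $x(b) = C$. At $t = a$, with $J := I(a)$ and $C = \frac{E(a)}{E(a)-1}J$, a short simplification $x(a) = E(a)[C - J] = E(a)J\bigl(\tfrac{E(a)}{E(a)-1} - 1\bigr) = \frac{E(a)}{E(a)-1}J = C$ shows $x(a) = C = x(b)$. The coefficient $\frac{E(a)}{E(a)-1}$ is precisely what is engineered to enforce periodicity; this step implicitly uses $E(a) = \hat{e}_1(a,b) \neq 1$, which holds because $a \neq b$ on a nontrivial time scale.

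The argument is essentially routine verification, and the one step needing care is the differentiation of $I$ with respect to its lower limit: getting both the sign and the evaluation point correct (value at $t$, paired with the $E^{\rho}(t)$ coming from the product rule) is exactly what delivers the clean cancellation producing $g(t)$ rather than a shifted or rescaled term. A secondary point to confirm is the ld-continuity and positivity of $E^{\rho}$, ensuring $\phi$ is ld-continuous so that the antiderivative exists and the Fundamental Theorem of Calculus applies.
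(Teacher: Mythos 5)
Your proof is correct and follows essentially the same route as the paper: a direct verification via the product rule $(fh)^{\nabla}=f^{\nabla}h+f^{\rho}h^{\nabla}$, the cancellation of $E^{\rho}(t)$ against the denominator $\hat{e}_1(\rho(s),b)$ evaluated at $s=t$, and the identity $E^{\nabla}=E$. You additionally write out the endpoint computation showing $x(a)=x(b)=C$, which the paper dismisses as ``easy to verify,'' so your version is if anything more complete.
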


\begin{proof}
We check \eqref{1.4} for each pair $(x_{i},g_{i})$, $i\in\{1,2,\ldots,n\}$,
by direct calculation. To simplify notation, we omit the indices $i$
and we write
$$
k=\frac{\hat{e}_{1}(a,b)}{\hat{e}_{1}(a,b)-1}
\int_{(a,b]\cap \mathbb{T}}\frac{g(s)}{\hat{e}_{1}(\rho(s),b)}\nabla s.
$$
From Theorem~\ref{2}, we have that
\begin{multline*}
x^{\nabla}(t)-x(t)
=\hat{e}_{1}(t,b)k-\hat{e}_{1}(t,b)\int_{(a,b]\cap \mathbb{T}}\frac{g(s)}{\hat{e}_{1}(\rho(s),b)}\nabla s\\
+\hat{e}_{1}(\rho(t),b)\frac{g(t)}{\hat{e}_{1}(\rho(t),b)}-\hat{e}_{1}(t,b)k
+\hat{e}_{1}(t,b)\int_{(a,b]\cap \mathbb{T}}\frac{g(s)}{\hat{e}_{1}(\rho(s),b)}\nabla s
= g(t)
\end{multline*}
for all $t\in \mathbb{T}_{\kappa}$. It is easy to verify that $x (a) = x (b)$.
\end{proof}

\begin{lemma}
\label{lem}
Let $r\in C_{ld}^{1}(\mathbb{T},\mathbb{R}^{n})$ be a function such that
$r^{\nabla}(t)<0$ for all $t\in\{t\in\mathbb{T}_{\kappa};r(t)>0\}$.
If $r(a)\geq r(b)$, then $r(t) \leq 0$ for all $t\in\mathbb{T}$.
\end{lemma}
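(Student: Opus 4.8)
The plan is to prove the statement by a maximum principle tailored to the nabla derivative, treating $r$ as the real-valued function whose sign is controlled by the hypothesis. First I would argue by contradiction: suppose $r(t)>0$ for some $t\in\mathbb{T}$. Since $\mathbb{T}$ is closed and bounded it is compact, and since $r\in C^{1}_{ld}(\mathbb{T},\mathbb{R}^{n})$ is in particular continuous, $r$ attains its maximum $M:=\max_{t\in\mathbb{T}}r(t)=r(t^{*})$ at some $t^{*}\in\mathbb{T}$, and by assumption $M>0$. The goal is then to show that such a positive maximum is incompatible with the differential inequality $r^{\nabla}<0$ on the set where $r>0$, together with the boundary relation $r(a)\ge r(b)$.

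The engine of the proof is the sign of the nabla derivative at a maximizer. Suppose a maximizer $t^{*}$ belongs to $\mathbb{T}_{\kappa}$. If $t^{*}$ is left-scattered, then $r^{\nabla}(t^{*})=\frac{r(t^{*})-r(\rho(t^{*}))}{\nu(t^{*})}$; since $r(t^{*})=M\ge r(\rho(t^{*}))$ and $\nu(t^{*})>0$, this gives $r^{\nabla}(t^{*})\ge 0$. If $t^{*}$ is left-dense, then $r^{\nabla}(t^{*})=\lim_{s\to t^{*}}\frac{r(t^{*})-r(s)}{t^{*}-s}$, and for $s<t^{*}$ the numerator $M-r(s)$ is nonnegative while $t^{*}-s>0$, so every difference quotient is nonnegative and hence $r^{\nabla}(t^{*})\ge 0$. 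In either case $r(t^{*})=M>0$ forces, through the hypothesis, $r^{\nabla}(t^{*})<0$, a contradiction. Consequently no maximizer of $r$ can lie in $\mathbb{T}_{\kappa}$.

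It remains to pin down where the maximum can sit, and this is the step I expect to be the main obstacle. The only point of $\mathbb{T}$ that may fail to belong to $\mathbb{T}_{\kappa}$ is the minimum $a$, and only when $a$ is right-scattered; the maximum $b$ always belongs to $\mathbb{T}_{\kappa}$. Thus the previous paragraph confines any positive maximum to $t^{*}=a$, so that $r(a)=M$ while $r(t)<M$ for all $t\in\mathbb{T}_{\kappa}$. The hard part will be to exploit the boundary condition $r(a)\ge r(b)$ so as to expel the maximizer from the inadmissible endpoint $a$ into $\mathbb{T}_{\kappa}$: the boundary relation is precisely the device linking the value at $a$ to the value at the admissible endpoint $b\in\mathbb{T}_{\kappa}$, at which the contradiction of the second paragraph is already available. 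Once the boundary condition has been used to force a maximizer inside $\mathbb{T}_{\kappa}$, the nabla-derivative computation above applies verbatim and rules out $M>0$, yielding $r(t)\le 0$ for every $t\in\mathbb{T}$. I would pay particular attention to the degenerate configurations—for instance when $\mathbb{T}$ consists of only a few points, or when $a$ is right-dense so that $a\in\mathbb{T}_{\kappa}$ and the endpoint difficulty disappears altogether—since these cases determine exactly when and how the boundary relation has to be invoked.
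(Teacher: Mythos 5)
Your overall architecture is the same as the paper's: argue by contradiction from a positive maximizer $t^{*}$, show that $r^{\nabla}(t^{*})\geq 0$ at any maximizer that is left-scattered or left-dense (the paper handles the left-dense case by integrating $r^{\nabla}<0$ over an interval $[t_{1},t_{0}]$ on which $r>0$, you use one-sided difference quotients; both work), and thereby confine a positive maximum to $t^{*}=a$. But the final step, which you explicitly defer (``the hard part will be to exploit the boundary condition''), is never carried out, and it is the entire content of the lemma. This is a genuine gap, not a presentational one, because with the inequality as literally stated, $r(a)\geq r(b)$, that step cannot be done: the statement is then false. Counterexample: $\mathbb{T}=[0,1]$, $r(t)=1-t$. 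Here $r\in C_{ld}^{1}$, $r^{\nabla}\equiv -1<0$ on all of $\mathbb{T}_{\kappa}$, in particular on $\{t:r(t)>0\}$, and $r(a)=1\geq 0=r(b)$, yet $r(0)=1>0$. A maximum sitting at $a$ is perfectly compatible with $r(a)\geq r(b)$, so no contradiction can ever be extracted from that inequality.

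The inequality in the statement is a typo, and the paper's own proof quietly uses the reversed one: in the case $t_{0}=a$ it invokes ``by hypothesis $r(b)\geq r(a)$'', and in the application (proof of Theorem~\ref{thm:mr:exist}) what is actually established is $r(a)-r(b)\leq \|v(a)-v(b)\|-(M(a)-M(b))\leq 0$, i.e.\ $r(a)\leq r(b)$. With that corrected hypothesis your missing step is one line: if the positive maximum $M$ is attained at $a$, then $M=r(a)\leq r(b)\leq M$ forces $r(b)=M$, so $b$ is also a maximizer; since $b>a$, the point $b$ lies in $\mathbb{T}_{\kappa}$ and is either left-scattered or left-dense, so your second-paragraph computation applies at $b$ and yields the contradiction. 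Two smaller slips are worth repairing as well. First, your intermediate claim that ``no maximizer of $r$ can lie in $\mathbb{T}_{\kappa}$'' overreaches: when $a$ is right-dense, $a$ belongs to $\mathbb{T}_{\kappa}$ but is neither left-scattered nor left-dense, and at such a point a maximum only gives $r^{\nabla}(a)\leq 0$ (right-sided quotients have nonpositive sign), which does not contradict $r^{\nabla}(a)<0$; the correct conclusion of that paragraph is that no maximizer lies in $\mathbb{T}\setminus\{a\}$, which is all you need. Second, the lemma must be read with $r$ real-valued ($n=1$), as in its application to $r(t)=\|x(t)-v(t)\|-M(t)$, for the sign hypotheses to make sense.
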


\begin{proof}
Suppose that there exists a $t \in \mathbb{T}$ such that $r(t)> 0$.
Then there exists a $t_{0}\in\mathbb{T}$ such that
$r(t_{0})=\max_{t\in\mathbb{T}}(r(t)>0)$. If $\rho(t_{0})<t_{0}$, then
$$
r^{\nabla}(t_{0})=\frac{r(\rho(t_{0}))-r(t_{0}}{\rho(t_{0})-t_{0}}\geq 0,
$$
which contradicts the hypothesis. If $t_{0}>a$ and $t_{0}=\rho(t_{0})$, then
there exists an interval $[t_{1},t_{0}]$ such that $r(t)>0$
for all $t\in[t_{1},t_{0}]$. Thus
$$
\int_{t_{1}}^{t_{0}}r^{\nabla}(s)\nabla s=r(t_{0})-r(t_{1})<0,
$$
which contradicts the maximality of $r(t_{0})$. Finally, if $t_{0}=a$, then
by hypothesis $r(b)\geq r(a)$ gives $r (a) = r (b)$. Taking $t_{0}= a$,
one can check that $r(a) \leq 0$ by using previous steps of the proof.
The lemma is proved.
\end{proof}


\section{Main Result}
\label{sec:3}

In this section we prove existence of solution to
problem \eqref{1.1}. A solution of this
problem is a function $x\in C^{1}_{ld}(\mathbb{T},\mathbb{R}^{n})$
satisfying \eqref{1.1}. Let us recall that $\mathbb{T}$ is bounded
with $a=\min\mathbb{T}$ and $b=\max\mathbb{T}$. We introduce the notion
of tube solution for problem \eqref{1.1} as follows.

\begin{definition}
Let $(v,M)\in C^{1}_{ld}(\mathbb{T},\mathbb{R}^{n})
\times C^{1}_{ld}(\mathbb{T},[0,+\infty[)$. We say that $(v,M)$
is a tube solution of \eqref{1.1} if
\begin{enumerate}
\item $\langle x-v(t),f(t,x(t))-v^{\nabla}(t)\rangle
+M(t)\|x-v(t)\|\leq M(t)M^{\nabla}(t)$
for every $t\in \mathbb{T}_{\kappa}$ and for every
$x\in\mathbb{R}^{n} $ such that $\|x-v(t)\|= M(t)$;

\item $v^{\nabla}(t)=f(t,v(t))$ and $\|x-v(t)\|-M^{\nabla}(t)<0$
for every $t\in \mathbb{T}_{\kappa}$ such that $M(t)=0$;

\item $\|v(a)- v(b)\|\leq M(a)-M(b)$.
\end{enumerate}
\end{definition}

Let $\mathbf{T}(v,M) := \{x\in C^{1}_{ld}(\mathbb{T},\mathbb{R}^{n})
:\|x(t)-v(t)\| \leq M(t)$ for every $t\in \mathbb{T}\}$.
We consider the following problem:
\begin{equation}
\label{1.3}
\begin{gathered}
x^{\nabla}(t)-x(t)=f(t,\widehat{x}(t))-\widehat{x}(t),
\quad t\in \mathbb{T}_{\kappa},\\
x(a)=x(b),
\end{gathered}
\end{equation}
where
\begin{displaymath}
\hat{x}(t)
= \left\{
\begin{array}{ll}
\frac{M(t)}{\|x-v(t)\|}(x(t)-v(t))+v(t)& \textrm{if $\|x-v(t)\|> M(t)$},\\
x(t) & \textrm{otherwise}.
\end{array} \right.
\end{displaymath}
Let us define the operator $\mathbf{T}_{\hat{p}}: C(\mathbb{T},\mathbb{R}^{n})
\rightarrow C(\mathbb{T},\mathbb{R}^{n})$ by
\begin{multline*}
\mathbf{T}_{\hat{p}}(x)(t)=\hat{e}_{1}(t,b)\left[
\frac{\hat{e}_{1}(a,b)}{\hat{e}_{1}(a,b)-1}\int_{(a,b]\cap \mathbb{T}}
\dfrac{f(s,\hat{x}(s))-\hat{x}(s)}{\hat{e}_{1}(\rho(s),b)}\nabla s\right.\\
\left.-\int_{(t,b]\cap\mathbb{T}}\dfrac{f(s,\hat{x}(s))
-\hat{x}(s)}{\hat{e}_{1}(\rho(s),b)}\nabla s \right].
\end{multline*}

\begin{proposition}
\label{pt}
If $(v,M)\in C^{1}_{ld}(\mathbb{T},\mathbb{R}^{n})
\times C^{1}_{ld}(\mathbb{T},[0,+\infty[)$ is a tube solution of
\eqref{1.1}, then $\mathbf{T}_{\hat{p}} : C(\mathbb{T},\mathbb{R}^{n})
\rightarrow C(\mathbb{T},\mathbb{R}^{n})$ is compact.
\end{proposition}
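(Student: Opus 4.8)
The plan is to show that $\mathbf{T}_{\hat p}$ is completely continuous by means of the Arzel\`a--Ascoli theorem on time scales, with the whole argument resting on one structural observation: the truncation $\hat x$ confines every input to a fixed compact set. Indeed, from the definition of $\hat x$ one checks directly that $\|\hat x(t)-v(t)\|\le M(t)$ for every $t\in\mathbb{T}$ and every $x\in C(\mathbb{T},\mathbb{R}^{n})$ (equality when $\|x-v(t)\|>M(t)$, and the trivial inequality otherwise), so the pair $(s,\hat x(s))$ always lies in the compact set $K:=\{(s,y):s\in\mathbb{T},\ \|y-v(s)\|\le M(s)\}$. Since $f$ is continuous and $v,M$ are bounded on the compact time scale $\mathbb{T}$, the quantity $f(s,\hat x(s))-\hat x(s)$ is then bounded by a constant $C_{0}:=\max_{(s,y)\in K}\|f(s,y)-y\|$ that does \emph{not} depend on $x$. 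This $x$-independent bound is what reduces the statement to a standard compactness argument.

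First I would record the elementary facts about the kernel. The exponential $\hat e_{1}(\cdot,b)$ solves $x^{\nabla}=x$ with $x(b)=1$, hence it is continuous and strictly positive on $\mathbb{T}$; on the compact set $\mathbb{T}$ it is therefore bounded above and bounded away from zero, and $\hat e_{1}(a,b)\ne 1$ because $a<b$. Consequently the factor $\hat e_{1}(t,b)$, the constant $\tfrac{\hat e_{1}(a,b)}{\hat e_{1}(a,b)-1}$, and the reciprocal $1/\hat e_{1}(\rho(s),b)$ are all bounded. Combining these bounds with Theorem~\ref{5} to estimate the two nabla integrals, together with $\|f(s,\hat x(s))-\hat x(s)\|\le C_{0}$, gives $\|\mathbf{T}_{\hat p}(x)(t)\|\le C_{1}$ for all $t\in\mathbb{T}$ and all $x$, with $C_{1}$ independent of $x$; hence $\mathbf{T}_{\hat p}$ maps the whole space into a uniformly bounded set.

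Next I would prove equicontinuity of the image, again uniformly in $x$. For $t_{1},t_{2}\in\mathbb{T}$ I would write $\mathbf{T}_{\hat p}(x)(t_{1})-\mathbf{T}_{\hat p}(x)(t_{2})$ and split it into two contributions: the part coming from $\hat e_{1}(t_{1},b)-\hat e_{1}(t_{2},b)$ multiplying the (bounded) bracket, and the part $\hat e_{1}(t_{2},b)$ multiplying $\int_{(t_{1},b]\cap\mathbb{T}}-\int_{(t_{2},b]\cap\mathbb{T}}=\pm\int_{(t_{1},t_{2}]\cap\mathbb{T}}$. The first is small because $\hat e_{1}(\cdot,b)$ is uniformly continuous on the compact $\mathbb{T}$; the second is bounded, via Theorem~\ref{5}, by $C_{0}$ times the bound on $1/\hat e_{1}(\rho(s),b)$ times $\int_{(t_{1},t_{2}]\cap\mathbb{T}}\nabla s=|t_{2}-t_{1}|$. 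Both estimates are independent of $x$, so the image of the whole space is equicontinuous, and the Arzel\`a--Ascoli theorem for functions on a time scale yields that $\mathbf{T}_{\hat p}(C(\mathbb{T},\mathbb{R}^{n}))$ is relatively compact.

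It remains to verify that $\mathbf{T}_{\hat p}$ is continuous, and this is where I expect the only genuine subtlety. The map $x\mapsto\hat x$ is the pointwise metric projection onto the closed Euclidean ball of radius $M(t)$ about $v(t)$, which is nonexpansive, so $x\mapsto\hat x$ is continuous from $C(\mathbb{T},\mathbb{R}^{n})$ into itself even at points where $\|x-v(t)\|=M(t)$ or $M(t)=0$; I would verify this case analysis explicitly. Given $x_{m}\to x$ uniformly, continuity of $f$ on the compact $K$ then gives $f(s,\hat x_{m}(s))-\hat x_{m}(s)\to f(s,\hat x(s))-\hat x(s)$ uniformly in $s$, and since the convergence is uniform, Theorem~\ref{5} lets us pass to the limit under the two nabla integrals, yielding $\mathbf{T}_{\hat p}(x_{m})\to\mathbf{T}_{\hat p}(x)$. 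Together with the relative compactness of the image, this shows $\mathbf{T}_{\hat p}$ is compact. The main obstacle is not any single estimate but making the equicontinuity bound genuinely uniform in $x$ over an arbitrary time scale possibly riddled with scattered points, which is exactly what the $x$-independent constant $C_{0}$ secures.
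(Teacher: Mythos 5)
Your proposal is correct and follows essentially the same route as the paper: uniform boundedness and equicontinuity of the image via bounds on $\hat e_{1}$ and on $f(s,\hat x(s))-\hat x(s)$ over the compact tube, Arzel\`a--Ascoli for relative compactness, and continuity obtained from uniform continuity of $f$ on a compact set together with convergence of the truncations. The only (welcome) refinement is that you justify $\hat x_{m}\to\hat x$ explicitly via nonexpansiveness of the metric projection onto $\overline{B}(v(t),M(t))$, a step the paper's proof asserts without argument.
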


\begin{proof}
We first prove the continuity of the operator $\mathbf{T}_{\hat{p}}$.
Let $\{x_{n}\}_{n\in \mathbb{N}}$ be a sequence of $C(\mathbb{T},\mathbb{R}^{n})$
converging to $x\in C(\mathbb{T},\mathbb{R}^{n})$. By Theorem~\ref{5},
\begin{equation*}
\begin{split}
\| \mathbf{T}_{\hat{p}}&(x_{n})(t)-\mathbf{T}_{\hat{p}}(x)(t)\|\\
&\leq (1+c)\|\hat{e}_{1}(t,b)\|
\, \left\|\int_{(a,b]\cap \mathbb{T}}\dfrac{f(s,\hat{x}_{n}(s))
-f(s,\hat{x}(s))-(\hat{x}_{n}(s)-\hat{x}(s))}{\hat{e}_{1}(\rho(s),b)}\nabla s \right\|\\
&\leq \frac{k(1+c)}{M}\left(\int_{(a,b]
\cap \mathbb{T}}\left\|f(s,\hat{x}_{n}(s))-f(s,\hat{x}(s))\right\|
+\left\|\hat{x}_{n}(s)-\hat{x}(s)\right\|\nabla s\right),
\end{split}
\end{equation*}
where $k:=\max_{t\in\mathbf{T}}|\hat{e}_{1}(t,b)|$,
$M:=\min_{t\in\mathbf{T}}(\hat{e}_{1}(t,b))$, and
$c:=\|\frac{\hat{e}_{1}(a,b)}{\hat{e}_{1}(a,b)-1}\|$.
Since there is a constant $R>0$ such that $\|\hat{x}\|_{C(\mathbb{T},\mathbb{R}^{n})}<R$,
there exists an index $N$ such that $\|\hat{x}_{n}\|_{C(\mathbb{T},\mathbb{R}^{n})}<R$
for all $n>N$. Thus $f$ is uniformly continuous on $\mathbb{T}_{\kappa}\times B_{R}(0)$.
Therefore, for $\epsilon>0$ given, there is a $\delta>0$ such that
for all $x,y\in \mathbb{R}^{n}$, where
\begin{equation*}
\|x-y\|<\delta<\frac{\epsilon M}{2k(1+c)(b-a)},
\end{equation*}
one has
\begin{equation*}
\|f(s,y)-f(s,x)\|<\frac{\epsilon M}{2k(1+c)(b-a)}.
\end{equation*}
By assumption, for all $s\in\mathbb{T}_{\kappa}$ it is possible to find
an index $\hat{N}>N$ such that
$\|\hat{x}_{n}-\hat{x}\|_{C(\mathbb{T},\mathbb{R}^{n})}<\delta$
for $n>\hat{N}$. In this case,
\begin{equation*}
\| \mathbf{T}_{\hat{p}}(x_{n})(t)-\mathbf{T}_{\hat{p}}(x)(t)\|
\leq \frac{2k(1+c)}{M}\int_{[a,b)\cap \mathbb{T}}\frac{\epsilon M}{2k(1+c)(b-a)}\nabla s
\leq \epsilon.
\end{equation*}
This proves the continuity of $\mathbf{T}_{\hat{p}}$.
We now show that the set $\mathbf{T}_{\hat{p}}(C(\mathbb{T},\mathbb{R}^{n}))$
is relatively compact. Consider a sequence $\{y_{n}\}_{n\in\mathbb{N}}$ of
$\mathbf{T}_{\hat{p}}(C(\mathbb{T},\mathbb{R}^{n}))$ for all $n\in\mathbb{N}$.
It exists $x_{n}\in C(\mathbb{T},\mathbb{R}^{n})$ such that
$y_{n}=\mathbf{T}_{\hat{p}}(x_{n})$. From Theorem~\ref{5} one has
$$
\|\mathbf{T}_{\hat{p}}(x_{n})(t)\|\leq\frac{k(1+c)}{M} \left(\int_{[a,b)
\cap \mathbb{T}}\|f(s,\hat{x}_{n}(s)) \|\nabla s
+\int_{[a,b)\cap \mathbb{T}}\|\hat{x}_{n}(s)\|\nabla s\right).
$$
By definition, there is an $R>0$ such that $\|\hat{x}_{n}(s)\|\leq R$ for all
$s\in\mathbb{T}$ and all $n\in\mathbb{N}$. Function $f$ is compact on
$\mathbb{T}_{\kappa}\times B_{R}(0)$ and we deduce the existence of a constant
$A>0$ such that $\|f(s,\hat{x}_{n}(s)\|\leq A$ for all $s\in\mathbb{T}_{\kappa}$
and all $n\in\mathbb{N}$. The sequence $\{y_{n}\}_{n}\in\mathbb{N}$
is uniformly bounded. Note also that
\begin{multline*}
\| \mathbf{T}_{\hat{p}}(x_{n})(t_{2})-\mathbf{T}_{\hat{p}}(x_{n})(t_{1})\|
\leq B\|\hat{e}_{1}(t_{2},b)- \hat{e}_{1}(t_{1},b)\|\\
+k\left\|\int_{(a,b]\cap \mathbb{T}} \frac{f(s,\hat{x}_{n}(s))
-\hat{x}_{n}(s)}{\hat{e}_{1}(\rho(s),b)}\nabla s \right\|
< B\|\hat{e}_{1}(t_{2},b)- \hat{e}_{1}(t_{1},b)\|+\frac{k(A+R)}{M}|t_{2}-t_{1}|
\end{multline*}
for $t_{1},t_{2}\in\mathbb{T}$, where $B$ is a constant
that can be chosen such that it is higher than
$$
\sup_{n\in\mathbb{N}}\left\|\frac{\hat{e}_{1}(a,b)}{\hat{e}_{1}(a,b)-1}
\int_{(a,b]\cap \mathbb{T}}\dfrac{f(s,\hat{x}_{n}(s))
-\hat{x}_{n}(s)}{\hat{e}_{1}(\rho(s),b)}\nabla s
+\int_{(t,b]\cap\mathbb{T}}\dfrac{f(s,\hat{x}_{n}(s))
-\hat{x}_{n}(s)}{\hat{e}_{1}(\rho(s),b)}\nabla s\right\|.
$$
This proves that the sequence $\{y_{n}\}_{n\in \mathbb{N}}$ is equicontinuous.
It follows from the Arzel\`{a}--Ascoli theorem, adapted to our context, that
$\mathbf{T}_{\hat{p}}(C(\mathbb{T},\mathbb{R}^{n}))$
is relatively compact. Hence $\mathbf{T}_{\hat{p}}$ is compact.
\end{proof}

\begin{theorem}
\label{thm:mr:exist}
If $(v,M)\in C^{1}_{ld}(\mathbb{T},[0,+\infty[)
\times C^{1}_{ld}(\mathbb{T},\mathbb{R}^{n})$ is a tube solution of
\eqref{1.1}, then problem \eqref{1.1} has a solution
$x\in C^{1}_{ld}(\mathbb{T},\mathbb{R}^{n})\cap \mathbf{T}(v,M)$.
\end{theorem}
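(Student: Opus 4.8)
The plan is to turn \eqref{1.1} into a fixed point problem for the auxiliary operator $\mathbf{T}_{\hat{p}}$, solve it by a Schauder-type argument, and then prove \emph{a posteriori} that the fixed point lies in the tube $\mathbf{T}(v,M)$, so that the truncation $\hat{x}$ becomes inactive and the modified problem \eqref{1.3} reduces to the original one.

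First I would invoke Proposition~\ref{pt}: since $(v,M)$ is a tube solution, $\mathbf{T}_{\hat{p}}\colon C(\mathbb{T},\mathbb{R}^{n})\to C(\mathbb{T},\mathbb{R}^{n})$ is compact, and the bounds established in its proof show that $\mathbf{T}_{\hat{p}}(C(\mathbb{T},\mathbb{R}^{n}))$ is uniformly bounded. Hence the closed convex hull $K$ of $\overline{\mathbf{T}_{\hat{p}}(C(\mathbb{T},\mathbb{R}^{n}))}$ is compact and convex, $\mathbf{T}_{\hat{p}}(K)\subseteq K$, and Schauder's fixed point theorem produces $x\in C(\mathbb{T},\mathbb{R}^{n})$ with $\mathbf{T}_{\hat{p}}(x)=x$. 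Because $\mathbf{T}_{\hat{p}}(x)$ is precisely the representation formula of Proposition~\ref{***} with $g=f(\cdot,\hat{x})-\hat{x}$, the computation in that proof gives $x\in C^{1}_{ld}(\mathbb{T},\mathbb{R}^{n})$ and shows that $x$ solves the modified boundary value problem \eqref{1.3}.

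The heart of the argument is then to localize $x$ inside the tube. Set $r(t):=\|x(t)-v(t)\|-M(t)$ and aim to apply Lemma~\ref{lem} to conclude $r\le 0$ on $\mathbb{T}$. The boundary hypothesis is checked first: with $p:=x(a)=x(b)$, the triangle inequality gives $\|p-v(a)\|-\|p-v(b)\|\le\|v(a)-v(b)\|$, and tube condition~(3) bounds the right-hand side by $M(a)-M(b)$, so $r(a)\le r(b)$, as required to invoke Lemma~\ref{lem}. For the differential inequality, fix $t\in\mathbb{T}_{\kappa}$ with $r(t)>0$; then $\|x(t)-v(t)\|>M(t)$ and the projection $\hat{x}(t)=v(t)+\frac{M(t)}{\|x(t)-v(t)\|}\bigl(x(t)-v(t)\bigr)$ satisfies $\|\hat{x}(t)-v(t)\|=M(t)$, so tube condition~(1) applies at the sphere point $\hat{x}(t)$. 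When $M(t)>0$ and $t$ is left-dense, I would differentiate $\|x-v\|$ using Example~\ref{example}, substitute $x^{\nabla}=x+f(t,\hat{x})-\hat{x}$ from \eqref{1.3}, exploit the colinearity of $x(t)-v(t)$ and $\hat{x}(t)-v(t)$ to transfer condition~(1) from $\hat{x}(t)$ to $x(t)-v(t)$, and read off the sign of $r^{\nabla}(t)$. At a left-scattered $t$ I would instead use the backward difference formula $r^{\nabla}(t)=\nu(t)^{-1}\bigl(r(t)-r(\rho(t))\bigr)$ together with convexity of the norm to estimate $\|x(\rho(t))-v(\rho(t))\|$. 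The degenerate points where $M(t)=0$ are covered by tube condition~(2).

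The step I expect to be the main obstacle is precisely this pointwise estimate $r^{\nabla}(t)<0$ on $\{t:r(t)>0\}$: one must reconcile the extra linear contribution produced by the $-x$ and $-\hat{x}$ terms of \eqref{1.3} with the decay furnished by condition~(1), and, crucially, carry the estimate through the left-scattered points, where the chain rule of Theorem~\ref{thm:pt} is unavailable and one argues directly with the graininess $\nu(t)$ and the backward difference quotient. Once $r\le 0$ is secured, $x\in\mathbf{T}(v,M)$ follows, the truncation satisfies $\hat{x}=x$, and \eqref{1.3} collapses to $x^{\nabla}=f(t,x)$ with $x(a)=x(b)$; thus $x\in C^{1}_{ld}(\mathbb{T},\mathbb{R}^{n})\cap\mathbf{T}(v,M)$ solves \eqref{1.1}.
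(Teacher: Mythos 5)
Your proposal follows essentially the same route as the paper's own proof: Schauder's fixed point theorem applied to the compact operator $\mathbf{T}_{\hat{p}}$ (via Propositions~\ref{pt} and~\ref{***}) to solve the truncated problem \eqref{1.3}, followed by the a posteriori localization $r(t):=\|x(t)-v(t)\|-M(t)\leq 0$ obtained from the pointwise estimate $r^{\nabla}(t)<0$ on $\{t: r(t)>0\}$ --- split into the left-dense, left-scattered and $M(t)=0$ cases exactly as in the paper, using Example~\ref{example}, the backward difference quotient, the colinearity of $x(t)-v(t)$ and $\hat{x}(t)-v(t)$, the tube conditions, and finally the boundary inequality together with Lemma~\ref{lem}. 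The only difference is one of detail rather than of method: you leave the central sign computation as a sketch (correctly flagging it as the delicate step), whereas the paper writes it out explicitly.
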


\begin{proof}
By Proposition~\ref{pt}, $\mathbf{T}_{\hat{p}}$ is compact. It has a fixed point
by Schauder's fixed point theorem. Proposition~\ref{***} implies that this fixed
point is a solution to problem \eqref{1.3}. Then it suffices to show that
for every solution $x$ of \eqref{1.3} one has $x\in\mathbf{T}(v,M)$.
Consider the set $A=\{t\in\mathbb{T}_{\kappa}:\|x(t)-v(t)\|>M(t)\}$.
If $t\in A$ is left dense, then by virtue of Example~\ref{example} we have
$$
\left(\parallel x(t)-v(t)\parallel-M(t)\right)^{\nabla}
= \frac{\langle x(t)-v(t),x^{\nabla}(t)
-v^{\nabla}(t)\rangle}{\| x(t)-v(t)\|}-M^{\nabla}(t).
$$
If $t\in A$ is left scattered, then
\begin{equation*}
\begin{split}
(\| x(t)-&v(t)\|-M(t))^{\nabla}
=\| x(t)-v(t)\|^{\nabla}-M^{\nabla}(t)\\
&= \frac{\| x(t)-v(t)\|^{2}-\| x(t)-v(t)\|\| x(\rho(t))
-v(\rho(t))\|}{\nu(t)\| x(t)-v(t)\|}-M^{\nabla}(t) \\
&\leq\frac{\langle x(t)-v(t),x(t)-v(t)- x(\rho(t))
+v(\rho(t))\rangle}{\nu(t)\| x(t)-v(t)\|}-M^{\nabla}(t ) \\
&= \frac{\langle x(t)-v(t),[f(t,\hat{x}(t))-\hat{x}(t)+x(t)]
-v^{\nabla}(t)\rangle}{\| x(t)-v(t)\|}-M^{\nabla}(t).
\end{split}
\end{equation*}
We will show that if $t\in A$, then $\left(\| x(t)-v(t)\|-M(t)\right)^{\nabla}<0$.
If $t\in A$ and $M(t)>0$, then
\begin{equation*}
\begin{split}
(\| x(t)-&v(t)\|-M(t))^{\nabla}=\| x(t)-v(t)\|^{\nabla} -M^{\nabla}(t)\\
&= \frac{\| x(t)-v(t)\|^{2}-\| x(t)-v(t)\|\| x(\rho(t))
-v(\rho(t))\|}{\nu(t)\| x(t)-v(t)\|}-M^{\nabla}(t) \\
&\leq \frac{\langle x(t)-v(t),x(t)-v(t)- x(\rho(t))
+v(\rho(t))\rangle}{\nu(t)\| x(t)-v(t)\|}-M^{\nabla}(t) \\
&=\frac{\langle x(t)-v(t),x^{\nabla}(t)
-v^{\nabla}(t)\rangle}{\| x(t)-v(t)\|}-M^{\nabla}(t)\\
&=\frac{\langle x(t)-v(t),f(t,\hat{x}(t))-v^{\nabla}(t)\rangle}{\| x(t)-v(t)\|}
+\frac{\langle x(t)-v(t),-\hat{x}(t)+x(t)\rangle}{\|x(t)-v(t)\|}-M^{\nabla}(t)  \\
&= \frac{\langle \hat{x}^{\nabla}(t)-v(t),f(t,\hat{x}(t))-v^{\nabla}(t)\rangle}{M(t)}
-M(t)+\| x(t)-v(t)\| -M^{\nabla}(t)\\
&\leq\frac{M(t)M^{\nabla}(t)-M(t)\| x(t)-v(t)\|}{M(t)}-M(t)
+\| x(t)-v(t)\|-M^{\nabla}(t)\\
&= -M(t)<0.
\end{split}
\end{equation*}
In addition, if $M(t)=0$, then
\begin{equation*}
\begin{split}
(\| x(t)-&v(t)\|-M(t))^{\nabla}
= \frac{\langle x(t)-v(t),f(t,\hat{x}(t))+[x(t)-\hat{x}(t)]
-v^{\nabla}(t)\rangle}{\| x(t)-v(t)\|}-M^{\nabla}(t) \\
&\leq\frac{\langle x(t)-v(t),f(t,v(t))-v^{\nabla}(t)\rangle}{\| x(t)-v(t)\|}
+\| x(t)-v(t)\|-M^{\nabla}(t)
<0.
\end{split}
\end{equation*}
If we set $r(t):=\| x(t)-v(t)\|-M(t)$, then $r^{\nabla}(t)<0$
for every $t\in\{t\in\mathbb{T}_{\kappa},r(t)\geq 0\}$.
Moreover, since $(v,M)$ is a tube solution of \eqref{1.1}, one has
$$
r(a)-r(b)\leq\| v(a)-v(b)\|-(M(a)-M(b))\leq0
$$
and thus the hypotheses of Lemma~\ref{lem} are satisfied,
which proves the theorem.
\end{proof}

\begin{example}
Consider the following boundary value problem on time scales:
\begin{equation}
\label{eq:prb:ex:exist}
\begin{gathered}
x^{\nabla}(t) = a_{1}\|x(t)\|^{2}x(t)
-a_{2}x(t)+a_{3}\varphi(t),
\quad t\in\mathbb{T}_{\kappa},\\
x(a) = x(b),
\end{gathered}
\end{equation}
where $a_{1}$, $a_{2}$, $a_{3}\geq 0$  are nonnegative real constants chosen
such that $a_{2}\geq a_{1}+a_{3}+1$ and $\varphi:\mathbb{T}_{\kappa}
\rightarrow \mathbb{R}^{n}$ is a continuous function satisfying
$\|\varphi(t)\|=1$ for every $t\in\mathbb{T}_{\kappa}$.
It is easy to check that $(v,m)\equiv(0,1)$ is a tube solution.
By Theorem~\ref{thm:mr:exist}, problem \eqref{eq:prb:ex:exist}
has a solution $x$ such that $\|x(t)\|\leq1$ for every $t\in \mathbb{T}$.
\end{example}


\section{Conclusion and Future Work}
\label{sec:conc}

We proved existence of a solution to a nonlinear first-order
nabla dynamic equation on time scales. For that the notion
of tube solution is used, in the spirit of the works
of Frigon and Gilbert \cite{[16],[11],[3]}.
Our results can be improved by using $\nabla$-Caratheodory functions
$f$ on the right-hand side of equation \eqref{1.1}, which are not necessarily
continuous. For that one needs to define a proper Sobolev space and
related nabla concepts. This is under investigation and will be addressed elsewhere.


\subsection*{Acknowledgments}

This research is part of first author's Ph.D. project,
which is carried out at Sidi Bel Abbes University, Algeria.
It was essentially finished while Bayour was visiting the Department
of Mathematics of University of Aveiro, Portugal, February to April of 2015.
The hospitality of the host institution and the financial support
of University of Chlef, Algeria, are here gratefully acknowledged.
Torres was supported by Portuguese funds through CIDMA and FCT,
within project UID/MAT/04106/2013.



\end{document}